\documentclass[twocolumn,noversion,nonote,squeezed,10pt]{cdcarticle}

\usepackage[english]{babel}
\usepackage[T1]{fontenc}
\usepackage{amsthm}
\usepackage{amsmath}
\usepackage{amssymb}
\usepackage{colonequals}
\usepackage[hidelinks]{hyperref}
\usepackage[font=footnotesize,labelfont=bf]{caption}
\usepackage[nameinlink]{cleveref}
\usepackage{cite}
\usepackage{enumitem}
\usepackage{tikz}

\makeatletter
\let\@fnsymbol\@arabic
\makeatother

\Crefname{figure}{Figure}{Figs.}
\crefname{assumption}{Assumption}{Assumptions}
\crefname{problem}{Problem}{Problems}

\newtheorem{assumption}{Assumption}
\newtheorem{problem}{Problem}
\newtheorem{definition}{Definition}
\newtheorem{example}{Example}
\newtheorem{theorem}{Theorem}
\newtheorem{proposition}{Proposition}
\newtheorem{lemma}{Lemma}

\newcommand*{\QEDB}{\hfill\ensuremath{\square}}

\newcommand{\bmat}[1]{\begin{bmatrix}#1\end{bmatrix}}
\newcommand{\pmat}[1]{\begin{pmatrix}#1\end{pmatrix}}
\newcommand{\bmatl}[3][1.2]{\renewcommand{\arraystretch}{#1}\left[\begin{array}{#2}#3\end{array}\right]}
\newcommand{\pmatl}[2]{\renewcommand{\arraystretch}{1.2}\left(\begin{array}{#1}#2\end{array}\right)}

\def\tp{\mathsf{T}}
\def\defeq{\colonequals}

\def\df{\nabla\!f}
\DeclareMathOperator*{\minimize}{minimize}
\def\A{\mathcal{A}}
\def\B{\mathcal{B}}
\def\C{\mathcal{C}}
\def\D{\mathcal{D}}
\def\F{\mathcal{F}}
\def\X{\mathcal{X}}
\def\natural{\mathbb{N}}    
\def\real{\mathbb{R}}       
\def\complex{\mathbb{C}}	
\def\symmetric{\mathbb{S}}  

\title{\LARGE\bf Convex Synthesis of First-Order Methods for Time-Varying\\Smooth Strongly Convex Optimization}

\author{Bryan Van Scoy%
\thanks{\hangindent=5mm%
Department of Electrical and Computer Engineering, Miami University, OH~45056, USA. Email: \texttt{bvanscoy@miamioh.edu}%
}
\and Gianluca Bianchin%
\thanks{\hangindent=5mm%
ICTEAM institute and the Department of Mathematical Engineering (INMA) at the University of Louvain, Belgium. Email: \texttt{gianluca.bianchin@uclouvain.be}\\[2pt]
This material is based upon work supported in part by the National Science Foundation under Award No. 2347121 and in part by the FRFS WEL-T Investigator Programme. Any opinions, findings and conclusions or recommendations expressed in this material are those of the authors and do not necessarily reflect the views of the National Science Foundation.}%
\hspace{-1cm}}

\note{}

\begin{document}

\maketitle

\begin{abstract}
Time-varying optimization is fundamental to decision-making in dynamic environments, where objectives evolve over time due to exogenous signals or data streams.
However, algorithms designed for static problems yield suboptimal decisions in dynamic scenarios, even asymptotically.
In this paper, we develop a robust control synthesis framework to systematically design first-order methods for smooth strongly convex problems that vary in time. Our approach leverages both convex robust control synthesis in the static setting and the internal model principle by directly embedding a model of the underlying variability into the designed algorithm.
\end{abstract}

\section{Introduction}

Optimization algorithms play a central role in modern control, machine learning, and networked systems, underpinning decision-making processes in applications ranging from large-scale data analytics to cyber-physical and energy systems. 
Recent advances have established a systems-and-control perspective for algorithmic optimization analysis and design, whereby iterative optimization methods are modeled as dynamical systems in feedback with nonlinear gradient oracles, enabling their analysis and design via tools from dissipativity theory, passivity, and robust control~\cite{LL-BR-AP:16,BH-LL:17}. 
Building on this viewpoint, convex synthesis approaches~\cite{CS-CE:21} have emerged as a principled framework for the automated design of algorithms, casting algorithm construction as a robust control synthesis problem, enabling the systematic exploration of algorithmic structures beyond classical hand-crafted schemes and leading to provably optimal methods~\cite{CS-CE-TH:23}.

Despite these advances, existing synthesis approaches address \emph{static} optimization problems, while many applications of practical interest involve \emph{time-varying} objectives driven by exogenous signals, evolving environments, or streaming data. 
In such settings, standard first-order methods (e.g., gradient descent) exhibit nonzero steady-state error~\cite{EH:16}, as they lack mechanisms to compensate for persistent temporal variations. 
From a control-theoretic perspective, this limitation stems from the absence of appropriate internal models~\cite{GB-BVS:25-cdc,GB-BVS:26-tac}. 
More critically, accelerated methods designed for time-invariant settings can suffer a noticeable loss of robustness in time-varying regimes, leading to amplified tracking errors and, in some cases, inferior asymptotic performance compared to non-accelerated schemes, as illustrated in \Cref{fig:robustness_comparison}.

\begin{figure}[t]
    \centering\includegraphics[width=\linewidth]{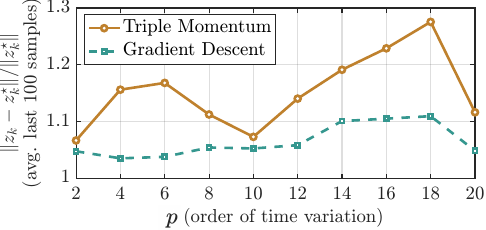}
    \caption{Asymptotic (averaged over the last 100 iterations) relative tracking error versus the order $p$ of the temporal variation, comparing gradient descent and the accelerated triple momentum method~\cite{BVS-RF-KL:17} on the time-varying quadratic problem $f(z,\theta_k)=z^\top Q z + \theta_k^\top z$, with $Q$ random (eigenvalues in $[1,50]$) and $\theta_{k+1}=S\theta_k$, $S \in \real^{p\times p}$. As $p$ increases, the performance of the accelerated method degrades relative to gradient descent, highlighting the need for acceleration schemes that explicitly account for temporal variability.}
    \label{fig:robustness_comparison}
\end{figure}

According to the internal model principle of time-varying optimization~\cite{GB-BVS:25-cdc,GB-BVS:26-tac}, perfect tracking and disturbance rejection require embedding a model of the exogenous signals within the optimization dynamics. Yet, this principle has not been systematically integrated in convex synthesis~\cite{CS-CE:21}.

{\it Contributions:}
Our main contribution is to demonstrate how the convex synthesis procedure from~\cite{CS-CE-TH:2023} can be adapted to incorporate an internal model of the temporal variability of the problem, thereby extending the approach to be applicable to time-varying optimization. Our design is based on the internal model principle of time-varying optimization~\cite{GB-BVS:25-cdc,GB-BVS:26-tac}. The effectiveness of the approach is illustrated through numerical simulations.

{\it Related works:} 
The systems-and-control perspective on optimization interprets first-order methods as dynamical systems in feedback with gradient oracles, thereby enabling analysis through tools from robust control, dissipativity, and integral quadratic constraints (IQCs)~\cite{LL-BR-AP:16,MM-MJ:19,CWS:2022,LL:2022}. 
This viewpoint has also motivated algorithm synthesis methods that go beyond classical hand-crafted designs, including direct synthesis with worst-case convergence guarantees~\cite{LL-PS:20}, IQC-based structure-exploiting design~\cite{SM-CS-CE:21}, linear matrix inequality (LMI)-based synthesis for optimization and saddle-point problems~\cite{DG-CE-CWS:22}, and convex synthesis frameworks for accelerated algorithms and extremum control~\cite{CS-CE:21,TH-CWS:21}. 
In parallel, time-varying optimization has been studied from both online and control-theoretic perspectives. When no explicit model of temporal variability is available, one typically obtains dynamic-regret or input-to-state-stability type guarantees for online gradient, primal-dual, and related running methods~\cite{MZ:03,EH-AA-SK:07,MC-ED-AB:20,AB-ED-AS:19}. 
When the temporal variability can instead be modeled or estimated, prediction-correction and related model-based schemes can achieve substantially improved tracking performance~\cite{AS-AM-AK-GL-AR:16,MF-SP-VP-AR:17,AS-ED-SP-GL-GG:20,NB-RC-SZ:24,GB-JC-JP-ED:21-tcns}. 
More recently, these directions have started to intersect with robust-control tools for time-varying optimization~\cite{AS-ED-SP-GL-GG:20,GB-BVS:26-tac}. 
At a more fundamental level, the internal-model viewpoint connects exact asymptotic tracking in time-varying optimization to output regulation theory, showing that exact tracking requires embedding a suitable model of the temporal variability within the algorithm itself~\cite{BF-WW:76,JH-WW:84,NB-RC-SZ:24,GB-BVS:26-tac}. 
Particularly relevant to the design methodology pursued in this work are~\cite{DA-LP-LM:2022,FDP:1993}, where it is observed that output regulation of nonlinear models may be achieved by linear controllers, provided that they incorporate more dynamics than those generated by the exosystem. These developments motivate the present work: rather than treating convex synthesis and internal-model design as separate threads, we integrate them into a unified synthesis framework to design optimization algorithms tailored to structured time-varying problems.

{\it Notation:} 
We denote the natural numbers including zero by $\natural$, and the reals by $\real$. The set of $n\times n$ symmetric matrices is $\symmetric^n$. We denote the state-space realization $(A,B,C,D)$ of a discrete-time LTI system $G$ as
\[
    G = \bmatl[1]{c|c}{A & B \\ \hline C & D}.
\]

\section{Problem Setup}

Consider an unconstrained optimization problem in which the objective function depends on a time-varying parameter:
\begin{equation}\label{eq:tvopt}
    \minimize \ f(z,\theta_k),
\end{equation}
where $f : \real^d\times\real^p\to\real$ is the objective function, ${z\in\real^d}$ the decision variable, and $\theta : \natural\to\real^p$ a time-varying parameter. We make the following assumption on the objective function.

\begin{assumption}[Objective]\label{assumption:objective}
    For any fixed parameter $\theta_\circ\in\real^p$, the function $z\mapsto f(z,\theta_\circ)$ is $L$-smooth and $\mu$-strongly convex with known parameters $0<\mu<L$.
\QEDB\end{assumption}

The objective being $L$-smooth means that the gradient is Lipschitz continuous with parameter $L$, and the objective being $\mu$-strongly convex means that $z\mapsto f(z,\theta_\circ)-\tfrac{\mu}{2} \|z\|^2$ is convex. With this assumption, the optimization problem has a unique optimal trajectory $z^* : \natural\to\real^d$. We use $\F_{\mu,L}$ to denote the set of all functions that satisfy \Cref{assumption:objective}.

The optimization problem varies in time due to the parameter $\theta_k$, which we assume is generated by a (known) LTI system.

\begin{assumption}[Exosystem]\label{assumption:exosystem}
    The parameter signal $\theta : \natural\to\real^p$ is generated by an autonomous LTI system satisfying
    \begin{equation}\label{eq:exosystem}
        \theta_{k+1} = S \theta_k \quad\text{for all }k\in\natural
    \end{equation}
    and some initial condition $\theta_0\in\real^p$. Moreover, all the eigenvalues of $S$ have unit modulus, all exosystem trajectories are bounded, and the model $S$ is known.~\QEDB
\end{assumption}

The requirement that~\eqref{eq:exosystem} is known is motivated by~\cite{GB-BVS:25-cdc,GB-BVS:26-tac}, which show that knowledge of the exosystem is necessary to compute an exact optimizer of~\eqref{eq:tvopt}. In its absence, one must incur nonzero regret~\cite{AS-ED-SP-GL-GG:20}; here, we focus on algorithms that achieve zero regret, and hence impose this requirement.

\begin{figure}[t]
    \centering
    \scalebox{0.8}{\includegraphics{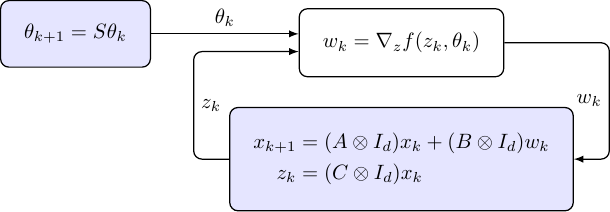}}
    \caption{Block diagram of a first-order method for time-varying optimization. The exosystem generates the time-varying parameter $\theta_k$, which affects the gradient of the objective. The method is a dynamical system with state $x_k$ that generates a sequence of points $z_k$ at which to evaluate the gradient based on measurements $w_k$ of the time-varying gradient.}
    \label{fig:alg}
\end{figure}

In this work, we are interested in designing first-order optimization methods that can query the gradient of the objective at each iteration. Specifically, the method selects a point $z_k$ at which to query the gradient, and then receives the corresponding value of the gradient
\begin{equation}
    w_k = \nabla_z f(z_k,\theta_k),
\end{equation}
with $\nabla_z f$ denoting the gradient of $f$ with respect to its first 
argument. To exclude exosystem modes that do not influence the gradient (and are therefore irrelevant to the optimization), we assume the system contains no nontrivial zero dynamics.

\begin{assumption}[Absence of zero dynamics]\label{assumption:zero-dynamics}
    The objective $f$ and exosystem trajectory $\theta$ are such that $\nabla_z f(z,\theta_k)=0$ for all $k\in\natural$ and some $z\in\real^d$ implies $\theta_k = 0$ for all $k\in\natural$.
\QEDB\end{assumption}

An optimization algorithm then consists of a mapping from $w$ to $z$. For implementability, we require this mapping to be causal, linear, time-invariant, strictly proper, and to act homogeneously across each dimension of $\mathbb{R}^d$. Formally, we consider methods of the form
\begin{subequations}\label{eq:alg}
\begin{align}
    x_{k+1} &= (A\otimes I_d) x_k + (B\otimes I_d) w_k, \\
    z_k &= (C\otimes I_d) x_k,
\end{align}
\end{subequations}
for some state-space matrices $A\in\real^{n\times n}$, $B\in\real^{n\times 1}$, and $C\in\real^{1\times n}$, where $x_k\in\real^{nd}$ is the state. In line with~\cite[\S 2.2]{CS-CE:2021}, we assume that the pair $\left(A, C\right)$ is detectable without loss of generality. The algorithm is illustrated in \Cref{fig:alg}.

For the algorithm~\eqref{eq:alg} to solve time-varying optimization problems, it must in particular be capable of solving time-invariant problems (e.g., when the objective does not depend on $\theta_k$). A necessary condition for this is that the pair $(A,C)$ has an observable eigenvalue at one~\cite[\S~II.C]{CS-CE-TH:2023}. This, in turn, implies the existence of a fixed point.

\begin{assumption}[Fixed point]\label{assumption:equilibirum-point}
    There exist $x_* \in \real^{nd}$ such that 
    \[
        x_* = (A\otimes I_d) x_* \qquad\text{and}\qquad (C\otimes I_d) x_*\neq 0. \tag*{\QEDB}
    \]
\end{assumption}

\begin{definition}
    We say that algorithm~\eqref{eq:alg} \textit{(locally) asymptotically tracks the optimizer} of~\eqref{eq:tvopt} with rate $\rho\in(0,1)$ if there exist constants $c>0$ and $\delta>0$ such that, for any initial condition $x_0$ with $\|x_0 - x_*\| < \delta$,
    \begin{equation}\label{eq:exp_rate}
        \|z_k - z_k^\star\| \leq c\,\rho^k\,\|x_0-x_0^\star\| \quad\text{for all }k\in\natural,
    \end{equation}
    where $z_k^\star\in\real^d$ is the unique optimizer at each $k$ and $x_k^\star$ is the corresponding fixed point of~\eqref{eq:alg}. Moreover, we say that the tracking is \textit{robust with respect to a set of objective functions and exosystem trajectories} if the bound holds for all such instances.
\end{definition}

With this setup, we can now state our main goal.

\begin{problem}\label{prob:main}
    Design an efficient procedure that, given the objective function parameters $\mu$ and $L$ and the temporal variability model $S$, automatically selects the state-space matrices $(A, B, C)$ in~\eqref{eq:alg} such that the resulting method robustly asymptotically tracks the optimizer of~\eqref{eq:tvopt} with optimal (i.e., minimal) rate $\rho$. Optimality is with respect to all admissible choices of state-space matrices $(A,B,C)$, and robustness is with respect to all objective functions $f$ and exosystem trajectories $\theta$ that satisfy \Cref{assumption:objective,assumption:exosystem,assumption:zero-dynamics}.~\QEDB
\end{problem}

\section{Internal Model Embedding}

In this section, we derive a structure for~\eqref{eq:alg} to ensure that the algorithm achieves asymptotic tracking. This will then enable us to reformulate~\Cref{prob:main} as a robust control problem.

Achieving asymptotic tracking of the optimizer imposes specific structural requirements on the method. In the time-invariant case (in which the model of the time variation is $S = I$, so that $\theta_k$ is constant), it is well-known that the method~\eqref{eq:alg} must have the form of an LTI system in series with an integrator~\cite[\S 2.2]{CS-CE:2021}. When the objective varies in time, by the \textit{internal model principle of time-varying optimization}~\cite{GB-BVS:25-cdc,GB-BVS:26-tac}, this concept generalizes to the requirement that the method must embed knowledge of the exosystem in its dynamics. 

\begin{figure}[t]
\centering
\includegraphics{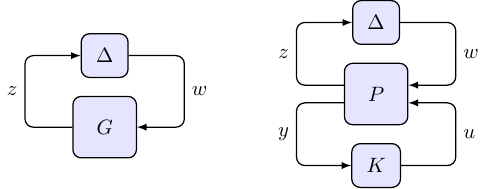}
\caption{Block diagrams of system interconnections. (Left) The algorithm transfer function $G(z)$ in feedback with an uncertainty $\Delta$ as in Thm.~\ref{thm:main}. (Right) Plant $P$ in feedback with controller $K$ and uncertainty $\Delta$ as in~\eqref{eq:plant}.}
\label{fig:robust-control}
\end{figure}

Building on this finding, we will now present the main result of this work. To state our result, we let $\mathbf{\Delta}$ denote the set of static maps $\Delta$ such that $w=\Delta(z)$ if and only if $w_k = \df(z_k)$ for some function $f\in\F_{\mu,L}$ with $\df(0) = 0$. Moreover, given a multi-index $\alpha \in \mathbb{N}^p$ and eigenvalues $\lambda = (\lambda_1,\ldots,\lambda_p)$ of $S$, we define $\lambda^\alpha \defeq \lambda_1^{\alpha_1} \lambda_2^{\alpha_2} \cdots \lambda_p^{\alpha_p}.$ We then define the set of \textit{harmonics} associated with $\lambda$ as
\begin{equation}
    \Omega_\lambda \defeq \{ \lambda^\alpha \mid \alpha \in \mathbb{N}^p \}.
\end{equation}
That is, $\Omega_\lambda$ contains all possible products of the eigenvalues of $S$, including repeated products, and characterizes all modes that can arise from their combinations.

\begin{example}\label{example:harmonics}
    If the exosystem contains a single frequency $\theta = \pi/3$, then $S$ has eigenvalues $\lambda = (e^{j\pi/3},e^{-j\pi/3})$, in which case the set of harmonics is $\Omega_\lambda = \{0,\tfrac{\pi}{3},\tfrac{2\pi}{3},\pi\}$.
\end{example}

\begin{theorem}[Internal model-based optimization design]\label{thm:main}
    Let $0<\mu<L$ and $\rho\in(0,1)$. Let $(A,B,C)$ be the state-space matrices of an algorithm~\eqref{eq:alg}. Then, the following statements are equivalent:
    \begin{itemize}
        \item[(i)] Algorithm~\eqref{eq:alg} locally asymptotically tracks the optimizer of~\eqref{eq:tvopt} with rate $\rho$ for all functions $f$ and exosystem trajectories $\theta$ that satisfy \Cref{assumption:objective,assumption:exosystem,assumption:zero-dynamics}.
        \item[(ii)] There exists a proper rational transfer function $K(z)$ such that $G(z) = C (zI - A)^{-1} B$ factors as
        \begin{equation}\label{eq:structure}
            G(z) = K(z)\,H(z) \text{ where } H(z)\defeq\frac{z^{|\Omega_\lambda|-1}}{\prod_{\omega\in\Omega_\lambda} (z-\omega)},
        \end{equation}
        and the feedback interconnection of algorithm~\eqref{eq:alg} with uncertainty $\Delta$ (see \Cref{fig:robust-control}) is robustly exponentially stable\footnote{Robust exponential stability with rate $\rho\in(0,1)$ amounts to ensuring that there exists $c>0$ such that, for all $\Delta\in\mathbf{\Delta}$ and all $x_0,$ the closed-loop state $x_k$ satisfies $\|x_k\|\leq c\,\rho^k\|x_0\|$ for all $k\geq 0$.} with rate $\rho$ for all~${\Delta\in\mathbf{\Delta}}$. \QEDB
    \end{itemize}
\end{theorem}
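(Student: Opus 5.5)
We prove the two implications separately. The plan is to split the tracking error into a steady-state part, governed by the internal model, and a transient part, governed by robust stability.

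\textbf{From (i) to (ii).} Asymptotic tracking requires that the closed loop admit a steady-state trajectory $(x_k^\star, z_k^\star)$ along which the gradient measurement satisfies $w_k = \nabla_z f(z_k^\star,\theta_k) = 0$.

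\emph{Necessity of the internal model.} The optimizer $z_k^\star$ is a nonlinear function of $\theta_k$. Expanding it, for instance with $f$ polynomial in $(z,\theta)$ or through a Taylor expansion, shows that it contains every mode $\lambda^\alpha$. For suitable admissible $f$ (quadratic in $z$ with coefficients polynomial in $\theta$), one can make $z_k^\star$ excite any prescribed $\omega\in\Omega_\lambda$. Assumption~\ref{assumption:zero-dynamics} ensures these modes are not cancelled.

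\emph{Consequence for $G$.} Since $z^\star = G w$ with $w\equiv 0$ in steady state, $z^\star$ must be a free response of $(A,C)$. Hence every $\omega\in\Omega_\lambda$ is an observable eigenvalue of $A$, and an internal-model argument as in~\cite{GB-BVS:26-tac} shows it must also be a pole of $G$. Because the modes are distinct and lie on the unit circle, $\prod_{\omega}(z-\omega)$ divides the denominator of $G$. The factor $z^{|\Omega_\lambda|-1}$ in $H$ is chosen so that $H$ has relative degree one. Then $K = G/H$ is proper, since $G$ is strictly proper of relative degree at least one.

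\emph{Robust stability.} Take $\theta\equiv 0$, which is admissible. Tracking from every initial condition near $x_*$ for all $f\in\F_{\mu,L}$ then becomes exponential stability of $G$ in feedback with every $\Delta\in\mathbf{\Delta}$. Here we shift coordinates so that $\nabla f(0)=0$. The local statement extends to a global one because $\mathbf{\Delta}$ is closed under the scaling $f\mapsto c^2 f(\cdot/c)$, which gives homogeneity.

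\textbf{From (ii) to (i).} Fix an admissible $f$ and $\theta$.

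\emph{Existence of a periodic steady state.} Construct $x_k^\star$ with $(C\otimes I)x_k^\star = z_k^\star$ and $w_k=0$. This is possible provided $z_k^\star$ lies in the span of the modes $\Omega_\lambda$ that $H$ generates with zero input. That requires $z^\star_k$ to be a finite combination of harmonics. Since the exosystem is bounded with unimodular eigenvalues, $\theta_k$ is quasi-periodic and $z^\star_k$ is a function of $\theta_k$. I expect this to be \textbf{the main obstacle}: in general $z_k^\star$ has infinitely many harmonics, so $\Omega_\lambda$ must be finite, which happens when the eigenvalues are roots of unity, as in Example~\ref{example:harmonics}. I would first try to restrict to this case, or argue via periodicity, so that $z^\star$ is $N$-periodic and is spanned by $\Omega_\lambda$ (the $N$-th roots of unity generated by $\lambda$).

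\emph{Error dynamics.} Set $\tilde x_k = x_k - x_k^\star$ and $\tilde z_k = z_k - z_k^\star$. The error satisfies the same LTI system, driven by
\[
\tilde w_k = \nabla_z f(z_k^\star+\tilde z_k,\theta_k)-\nabla_z f(z_k^\star,\theta_k).
\]
At each $k$ this is the gradient of a shifted function in $\F_{\mu,L}$ vanishing at $0$. It is therefore a time-varying element of $\mathbf{\Delta}$.

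\emph{Convergence.} Robust exponential stability with rate $\rho$ then yields $\|\tilde x_k\|\le c\rho^k\|\tilde x_0\|$, and hence~\eqref{eq:exp_rate}. A subtle point is that $\mathbf{\Delta}$ is defined with static maps, while $\tilde w$ is time-varying. I would handle this by noting that the robust stability certificate (an IQC/Lyapunov argument, as in the convex synthesis setting) uses only pointwise sector and co-coercivity constraints, so it applies equally to time-varying $\Delta_k$. Alternatively, the footnote's notion can be read as already covering time-varying maps.
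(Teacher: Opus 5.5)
The step that fails is the last one of (ii)$\Rightarrow$(i). Your error system is driven by the \emph{time-varying} map $\tilde z\mapsto\nabla_z f(z_k^\star+\tilde z,\theta_k)$, but (ii) only gives robust stability against \emph{static} $\Delta\in\mathbf{\Delta}$. Neither of your patches closes this.
\begin{itemize}
\item Statement (ii) is a property of $G$, not a particular certificate, so you cannot appeal to ``the certificate''.
\item Even for the certificates of this paper the claim is false. The Zames--Falb multipliers with $\ell\ge 1$ give inequalities coupling $\bar q_k$ with $\bar p_{k-j}$. These come from interpolating \emph{one fixed} $f$ at different instants, and they are invalid once $f$ changes with $k$.
\item Only the pointwise sector inequality survives time variation. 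By the circle criterion (Schwarz lemma applied to the loop-transformed closed loop at the pole $z=1$ of $G$), that inequality alone cannot certify any rate below $(L-\mu)/(L+\mu)$. So the accelerated static rates certified with $\ell\ge1$ cannot be carried over this way.
\item $\nabla^2_{zz}f(\cdot,\theta)$ can be prescribed essentially freely along the periodic exosystem orbit. Your error dynamics can therefore be an essentially arbitrary periodic switching between curvatures in $[\mu,L]$, and (ii) says nothing about that.
\item Reading the footnote as covering such maps changes the theorem.
\end{itemize}
The missing idea is locality in the exosystem state as well. The paper goes through \Cref{lemma:IMP} (nonlinear output regulation), where $(\theta,x)$ is a single autonomous system. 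There, only the frozen problem at $\theta=0$ needs to be exponentially stable, which is a static instance covered by (ii). The invariant manifold $x=\sigma(\theta)$ with $C\sigma(\theta)=z^\star(\theta)$ plays the role of your steady state. Within your own route, the analogous repair is to take $\theta_0$ small. Then $(A\otimes I_d)+(B\otimes I_d)\nabla^2_{zz}f(z_k^\star,\theta_k)(C\otimes I_d)$ is a small time-varying perturbation of a static, exponentially stable loop, at the price of an arbitrarily small loss in rate.

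Three further remarks.
\begin{enumerate}
\item Read literally, the factorization in (ii) is vacuous. $H$ has relative degree one, so every strictly proper $G$ equals $KH$ with the proper $K=G/H$; gradient descent is an example. Your steady-state step, ``$z_k^\star$ lies in the span of the modes that $H$ generates'', silently assumes $K$ cancels none of these poles. What you actually need is that $G$ itself has a pole at every $\omega\in\Omega_\lambda$, so that free responses of $(A,C)$ contain those modes. This is what your (i)$\Rightarrow$(ii) direction proves, and what the paper encodes via the eigenvalue and non-resonance conditions of \Cref{lemma:structure}. You should state (ii) in that form.
\item The pole claim in (i)$\Rightarrow$(ii) need not be outsourced to a citation. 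An eigenvalue of $A$ on the unit circle that is uncontrollable from $B$ or unobservable through $C$ persists under every feedback. That contradicts the robust stability you extract with $\theta\equiv0$. Given robust stability, this is a simpler substitute for the paper's resonance argument.
\item The ``main obstacle'' you flag is not one. Writing $|\Omega_\lambda|$ in $H$ presupposes finitely many harmonics, which forces every eigenvalue of $S$ to be a root of unity. Then $S^N=I$, with $N$ the least common multiple of the orders, and $\Omega_\lambda$ is exactly the set of $N$-th roots of unity. So your periodicity construction of the steady state is complete, and it is more elementary than the paper's power-series construction of $\sigma$.
\end{enumerate}
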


The proof of this claim is postponed to the appendix. 

It follows from \Cref{thm:main} that designing an algorithm~\eqref{eq:alg} to track the optimizer of the time-varying optimization problem~\eqref{eq:tvopt} is equivalent to solving a robust control problem with plant $P$ and uncertainty set $\mathbf{\Delta}$. From a control-theoretic viewpoint, the structural condition~\eqref{eq:structure} admits the following interpretation: the transfer function $H(z)$ captures the required internal model through its pole structure and is thus regarded as the \textit{plant}. In turn, $K(z)$ shapes the behavior of $H(z)$ and is therefore interpreted as the \textit{controller}.

Let a state-space realization of the plant be
\begin{equation}\label{eq:model}
    H = \bmatl{c|c}{A_p & B_p \\ \hline C_p & 0}.
\end{equation}
By \Cref{thm:main}, the algorithm must then have the form shown in \Cref{fig:robust-control} (right) with plant
\begin{equation}\label{eq:plant}
    \bmat{z \\ y} = P \bmat{w \\ u} \quad\text{where}\quad
    P = \bmatl{c|cc}{A_p & B_p & 0 \\ \hline 0 & 0 & 1 \\ C_p & 0 & 0}.
\end{equation}
Let $n_p \defeq |\Omega_\lambda|$ denote the dimension of the plant, which is the number of exosystem harmonics.

\section{Robust Control-Based Design}

With \Cref{prob:main} reformulated as a robust control problem, we now propose the use of the convex synthesis methodology from \cite{CS-CE-TH:2023,CS-CE:2025} to design the controller $K(z)$. Once the controller is obtained, the algorithm transfer function is given in~\eqref{eq:structure}, the state-space matrices $(A,B,C)$ are any realization of this transfer function, and the algorithm is~\eqref{eq:alg}.

\subsection{Transformations}

Following this strategy, we first apply a chain of transformations to put the system into standard form~\cite{CS-CE-TH:23}. As a first transformation, we exponentially weight the signals so that stability of the transformed system is equivalent to exponential stability of the original system. To that end, define the exponential signal weighting map
\[
    T_\rho(z_0,z_1,z_2,\ldots) = (z_0,\rho z_1, \rho^2 z_2,\ldots)
\]
with parameter $\rho>0$. The map $T_\rho$ is linear and invertible with inverse $T_{\rho^{-1}}$. The transformed system is shown in \Cref{fig:weighting}, where the transformed plant is
\begin{equation}
    \bmat{\bar z \\ \bar y}
    = \bmatl[1]{c|cc}{\rho^{-1} A_p & \rho^{-1} B_p & 0 \\ \hline 0 & 0 & 1 \\ C_p & 0 & 0} \bmat{\bar w \\ \bar u}.
\end{equation}
The transformed system is Lyapunov stable\footnote{Exponential stability means there exists $c>0$ such that, for every $\bar x_0$, $\|\bar x_k\|\leq c\,\rho^k\|\bar x_0\|$ for all $k$. Lyapunov stability is the same with $\rho=1$.} if and only if the original system is exponentially stable with rate $\rho$~\cite{CS-CE-TH:23}.

We next filter the input and output of the (transformed) uncertainty through a static filter to produce the signals $\bar p$ and~$\bar q$. The system from $(\bar q,\bar u)$ to $(\bar p,\bar y)$ is
\[
    \bmat{\bar p \\ \bar y}
    = \bmatl[1]{c|cc}{ \rho^{-1} A_p & \rho^{-1} B_p & \rho^{-1} \mu B_p \\ \hline 0 & -1 & L-\mu \\ C_p & 0 & 0 } \bmat{\bar q \\ \bar u}.
\]
All admissible trajectories of the transformed system satisfy the passivity property $0 \leq \sum_{k=0}^T \bar q_k^\tp \bar p_k$ for all $T\in\natural$~\cite{CS-CE-TH:23}. In fact, we can generate a parameterized set of constraints by filtering~$\bar p$ through the following system, known as a Zames--Falb multiplier~\cite{GZ-PF:1968},
\[
    \setlength{\arraycolsep}{3pt}
    \psi = \bmatl{c|c}{A_f & B_f \\ \hline C_f & D_f}
    = \bmatl[1]{cccc|c}{0 & 1 & \ldots & 0 & 0 \\[-4pt] 0 & 0 & \ddots & \vdots & \vdots \\[-4pt] \vdots & \vdots & \ddots & 1 & 0 \\ 0 & 0 & \ldots & 0 & 1 \\\hline \lambda_\ell & \lambda_{\ell-1} & \ldots & \lambda_1 & \lambda_0}.
\]
The output $\bar r$ of this filter also satisfies the passivity property $0\leq\sum_{k=0}^T \bar q_k^\tp \bar r_k$ for all $T\in\natural$ if the filter parameters $\lambda_0,\ldots,\lambda_\ell\in\real$ belong to the set
\begin{equation*}
    \Lambda_\ell^\rho = \biggl\{(\lambda_0,\ldots,\lambda_\ell) \mid \lambda_1,\ldots,\lambda_\ell \leq 0, \sum_{j=0}^\ell \rho^{-j} \lambda_j \geq 0 \biggr\}.
\end{equation*}
The transformed plant combined with the filter yield
\[
    \bmat{\bar r \\ \bar y} = \bmatl{cc|cc}{A_f & 0 & -B_f & (L-\mu) B_f \\ 0 & \rho^{-1} A_p & \rho^{-1} B_p & \rho^{-1} \mu B_p \\ \hline C_f & 0 & -D_f & (L-\mu) D_f \\ 0 & C_p & 0 & 0} \bmat{\bar q \\ \bar u}.
\]
For notational convenience, we denote this system by
\[
    \hat P = \bmatl{c|cc}{\hat A & \hat B_w & \hat B \\ \hline \hat C_z & \hat D_{zw} & \hat D_z \\ \hat C & 0 & 0}.
\]
To summarize, for any transformed uncertainty $\bar\Delta$ with $\Delta\in\mathbf{\Delta}$, all trajectories of the transformed system in \Cref{fig:weighting} satisfy the passivity property $0\leq\sum_{k=0}^T \bar q_k^\tp \bar r_k$ for all $T\in\natural$ whenever the filter parameters satisfy $\lambda\in\Lambda_\ell^\rho$.

\begin{figure}
\centering
\includegraphics{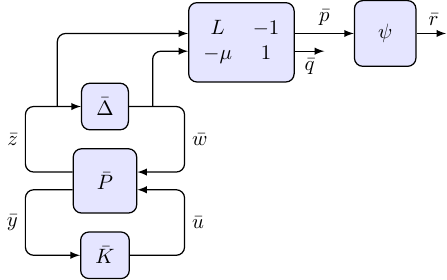}
\caption{Equivalent robust control setup after exponential scaling and filtering. Signals are scaled by $T_{\rho^{-1}}$, e.g., $\bar u = T_{\rho^{-1}} u$, and the plant is transformed as $\bar P = T_{\rho^{-1}} \circ P \circ T_\rho$, and similarly for the uncertainty and controller. The input $\bar z$ and output $\bar w$ of the transformed uncertainty $\bar\Delta$ are then passed through a static map, with the first output $\bar p$ then passed through a dynamic filter $\psi$ resulting in $\bar r$.}
\label{fig:weighting}
\end{figure}

\subsection{Analysis}

Before designing controllers, we begin with the simpler problem of analyzing the convergence rate of a fixed controller. Let $K$ be a controller of dimension $n_c$, and let $(\A,\B,\C,\D)$ denote a state-space realization of the corresponding system from $\bar q$ to $\bar r$ in \Cref{fig:weighting}. We then have the following result.

\begin{proposition}[Analysis]
If there exists a matrix $\X\in\symmetric^{n_p+\ell+n_c}$ and filter parameters $\lambda\in\Lambda_\ell^\rho$ such that
\begin{equation}\label{eq:analysis}
    \X\succ 0, \quad \pmat{\A & \B \\ I & 0 \\ \C & \D \\ 0 & 1}^\tp \pmat{\X & 0 & 0 & 0 \\ 0 & -\X & 0 & 0 \\ 0 & 0 & 0 & 1 \\ 0 & 0 & 1 & 0} \pmat{\A & \B \\ I & 0 \\ \C & \D \\ 0 & 1} \prec 0,
\end{equation}
then the interconnection in \Cref{fig:robust-control} (right) is robustly exponentially stable with rate $\rho$ for all uncertainties~${\Delta\in\mathbf{\Delta}}$.~\QEDB
\end{proposition}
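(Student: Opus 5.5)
We would prove the analysis result with a standard dissipativity argument applied to the transformed interconnection of \Cref{fig:weighting}. The plan is to show that the transformed closed-loop system is Lyapunov stable; by the equivalence recalled in the transformation subsection, this is the same as exponential stability with rate $\rho$ of the original interconnection in \Cref{fig:robust-control} (right).

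Let $\xi_k$ denote the state of the realization $(\A,\B,\C,\D)$ of the map $\bar q\mapsto\bar r$. This state collects the filter state $\ell$, the scaled plant state $n_p$, and the controller state $n_c$. Fix $\Delta\in\mathbf{\Delta}$ and consider any trajectory of the transformed loop. Multiplying the matrix inequality \eqref{eq:analysis} on the left and right by $(\xi_k,\bar q_k)$ and using strictness, we obtain some $\epsilon>0$ such that
\[
    \xi_{k+1}^\tp\X\xi_{k+1}-\xi_k^\tp\X\xi_k + 2\,\bar q_k^\tp\bar r_k \le -\epsilon\bigl(\|\xi_k\|^2+\|\bar q_k\|^2\bigr).
\]
The factor $2$ comes from the off-diagonal block pairing $\bar r$ with $\bar q$. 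The Kronecker structure across the $d$ coordinates is handled by applying the same inequality with $\X\otimes I_d$, which is harmless.

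Summing from $k=0$ to $T$ gives
\[
    \xi_{T+1}^\tp\X\xi_{T+1}+\epsilon\sum_{k=0}^T\|\xi_k\|^2 \le \xi_0^\tp\X\xi_0 - 2\sum_{k=0}^T\bar q_k^\tp\bar r_k .
\]
Because $\lambda\in\Lambda_\ell^\rho$ and $\Delta\in\mathbf{\Delta}$, the Zames--Falb passivity property stated earlier gives $\sum_{k=0}^T\bar q_k^\tp\bar r_k\ge 0$. Combined with $\X\succ0$, this yields $\lambda_{\min}(\X)\|\xi_{T+1}\|^2\le\lambda_{\max}(\X)\|\xi_0\|^2$. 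Hence $\|\xi_k\|\le c\|\xi_0\|$ uniformly in $k$ and in $\Delta$, which is Lyapunov stability of the transformed system; in fact we even get square-summability. Undoing $T_\rho$ gives $\|x_k\|\le c\rho^k\|x_0\|$ for the original state.

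The main obstacle is bookkeeping the initial conditions of the filter $\psi$. The passivity inequality for $\bar r$ is established for the filter started at zero, while the closed-loop state also contains the filter and plant/controller states. We would handle this in one of two ways:
\begin{itemize}
    \item Take the filter initial state to be zero, which is legitimate because $\psi$ is an auxiliary analysis device. The filter state is then a function of past $\bar p$ and hence of past loop signals. With $\xi_0=(0,x_0)$, the bound on $\xi_k$ controls the original loop state $x_k$.
    \item Alternatively, if needed, absorb a nonzero filter initialization into the constant $c$ via a finite-horizon bound.
\end{itemize}
A related subtlety is that passivity of $\bar r$ relies on the uncertainty being a sector-bounded slope-restricted gradient with $\df(0)=0$. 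This is exactly the definition of $\mathbf{\Delta}$, so the fixed point has been shifted to the origin as in the paper's setup.
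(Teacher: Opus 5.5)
Your proposal is correct, and it is the standard dissipativity argument behind this result; the paper itself states the proposition without proof and relies on the framework of \cite{CS-CE-TH:23}. There, the same ingredients are used: the storage function $\xi^\tp\X\xi$, summation over $k=0,\ldots,T$, and the finite-horizon Zames--Falb inequality $\sum_{k=0}^T\bar q_k^\tp\bar r_k\ge 0$ with $\psi$ initialized at zero, as in your first option. Together these give the $\Delta$-independent bound $\|\xi_k\|\le\sqrt{\lambda_{\max}(\X)/\lambda_{\min}(\X)}\,\|\xi_0\|$, hence rate $\rho$ after undoing $T_\rho$.
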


\subsection{Convex synthesis with fixed multiplier}

While the analysis condition enables us to evaluate the rate of a given controller, our goal is to design controllers. Simply searching over controller matrices is not tractable, as the analysis conditions are not simultaneously convex in $\X$, $\Lambda_\ell^\rho$, and the state-space matrices of $K$. When the multiplier $\psi$ is fixed, however, the controller synthesis follows from seminal $\mathcal{H}_\infty$ synthesis results~\cite{PG-PA:1994}. To that end, let
\[
    \hat U = \text{diag}(I_\ell,{C_p}_\perp,1), \quad
    \hat V^\tp = \bmat{B_f^\tp & \tfrac{\mu}{\rho(L-\mu)} B_p^\tp & D_f^\tp}_\perp
\]
where $M_\perp$ denotes a matrix whose columns are a basis for the kernel of $M$. Then, there exists a controller $K$ and matrix $\X\succ 0$ such that the analysis LMI~\eqref{eq:analysis} is feasible if and only if there exist symmetric matrices $\hat X,\hat Y\in\symmetric^{n_p+\ell}$ such that
\begin{subequations}
\bgroup\setlength{\arraycolsep}{2pt}
\begin{align*}\label{eq:synthesis-fixed-multiplier}
    \bigl[\cdot\bigr]^\tp \pmatl{cc|cc}{\hat X & 0 & 0 & 0 \\ 0 & -\hat X & 0 & 0 \\ \hline 0 & 0 & 0 & 1 \\ 0 & 0 & 1 & 0} \left[\pmat{\hat A & \hat B_w \\ I & 0 \\ \hline \hat C_z & \hat D_{zw} \\ 0 & 1}\hat U\right] &\prec 0, \\
    \left[\hat V \pmatl{cc|cc}{-I & \hat A & 0 & \hat B_w \\ 0 & \hat C_z & -1 & \hat D_{zw}}\right] \pmatl{cc|cc}{\hat Y & 0 & 0 & 0 \\ 0 & -\hat Y & 0 & 0 \\ \hline 0 & 0 & 0 & 1 \\ 0 & 0 & 1 & 0} \bigl[\cdot\bigr]^\tp &\succ 0, \\
    \pmat{\hat Y & I \\ I & \hat X} &\succ 0.
\end{align*}\egroup
\end{subequations}
The LMIs have the following interpretation with respect to the transformed plant $\hat P$: the first LMI is the primal dissipativity constraint, the second is the dual dissipativity constraint, and the third is the coupling condition~\cite{PG-PA:1994,JCW:1972}.

\subsection{Convex synthesis over fixed multipliers}

The matrix inequalities in the previous section are linear in the unknowns when the multiplier $\psi$ is fixed, but become nonlinear when the multiplier coefficients are included as variables. While there is no known method to efficiently search over general multipliers, a particularly clever approach was discovered in~\cite{CS-CE-TH:2023} for the case of causal Zames--Falb multipliers. The convexification procedure hinges on the fact that SISO LTI systems commute, with their state-space realizations related through a particular state transformation. The convex algorithm synthesis based on this insight adapted to our particular setting as follows. 

Given filter parameters $\lambda\in\Lambda_\ell^\rho$, the state transformation relating the commuting LTI system is
\[
    T = \bmat{(L-\mu)^{-1} I_\ell & 0 \\ (L-\mu)^{-1} N & \sum_{j=0}^\ell \lambda_j \rho^j A_p^{-j}} = \bmat{T_f \\ \tilde T},
\]
where the matrix $N\in\real^{n_p\times\ell}$ is the solution to the Sylvester equation
\begin{equation}\label{eq:N}
    A_p N - \rho N A_f + \mu B_p C_f = 0.
\end{equation}
With the annihilator matrix $V^\tp = \bmat{\rho^{-1} \mu B_p^\tp & L-\mu}_\perp$, consider the following matrix inequalities:
\begin{subequations}\label{eq:synthesis}
\bgroup\setlength{\arraycolsep}{1.5pt}
\begin{align}
    \bigl[\cdot\bigr]^\tp \pmatl{cc|cc}{\hat X & 0 & 0 & 0 \\ 0 & -\hat X & 0 & 0 \\ \hline 0 & 0 & 0 & 1 \\ 0 & 0 & 1 & 0} \left[\pmat{\hat A & \hat B_w \\ I & 0 \\ \hline \hat C_z & \hat D_{zw} \\ 0 & 1}\hat U\right] &\prec 0, \\
    \left[V \pmatl{cc|cc}{-I & A_p & 0 & \tilde T \hat B_w \\ 0 & 0 & -1 & \hat D_{zw}}\right] \pmatl{cc|cc}{\tilde Y & 0 & 0 & 0 \\ 0 & -\rho^{-2}\tilde Y & 0 & 0 \\ \hline 0 & 0 & 0 & 1 \\ 0 & 0 & 1 & 0} \bigl[\cdot\bigr]^\tp &\succ 0, \\
    \pmat{\tilde Y & \tilde T \\ \tilde T^\tp & \hat X} &\succ 0.
\end{align}\egroup
\end{subequations}
We can now state the main convex synthesis result from~\cite{CS-CE-TH:2023}.

\begin{theorem}\label{thm:synthesis}
    Fix the rate $\rho\in(0,1)$, number of multipliers $\ell\in\natural$, and model $S\in\real^{p\times p}$. The following are equivalent.
    \begin{enumerate}
        \item There exists an algorithm~\eqref{eq:alg} such that the analysis LMI~\eqref{eq:analysis} is feasible for some $\X$ and $\lambda\in\Lambda_\ell^\rho$.
        \item There exist matrices $N\in\real^{n_p\times\ell}$, $\hat X\in\symmetric^{n_p+\ell}$, and $\tilde Y\in\symmetric^{n_p}$ as well as parameters $\lambda\in\Lambda_\ell^\rho$ that satisfy the Sylvester equation~\eqref{eq:N} and LMIs~\eqref{eq:synthesis}.
  \QEDB  \end{enumerate}
\end{theorem}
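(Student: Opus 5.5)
The plan is to reduce the statement to the fixed-multiplier synthesis result and then convexify the dependence on $\lambda$ through the commutation of SISO systems. By \Cref{thm:main}, statement 1 is equivalent to the existence of a controller $K$ of some order $n_c$ such that the interconnection in \Cref{fig:weighting} satisfies the analysis LMI~\eqref{eq:analysis} for some $\X\succ 0$ and some $\lambda\in\Lambda_\ell^\rho$. Fix such a $\lambda$. The multiplier $\psi$ is then a fixed filter, and the Gahinet--Apkarian elimination result~\cite{PG-PA:1994} stated in the previous subsection applies. It shows that feasibility of~\eqref{eq:analysis} for some $K$ and $\X$ is equivalent to the existence of $\hat X,\hat Y\in\symmetric^{n_p+\ell}$ satisfying the primal, dual, and coupling LMIs of that subsection. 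The primal LMI coincides verbatim with the first inequality of~\eqref{eq:synthesis}. What remains is to show that the pair consisting of the dual and coupling conditions, which are bilinear in $(\hat Y,\lambda)$ because $\hat A,\hat B_w,\hat C_z,\hat D_{zw}$ depend on $\lambda$, is equivalent to the last two inequalities of~\eqref{eq:synthesis} in the variables $(\tilde Y,\lambda,N)$.

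For this step I would follow~\cite{CS-CE-TH:2023}. The transformed plant from $\bar q$ to $\bar r$ is the series connection of the SISO system $\rho^{-1}$-scaled $(A_p,B_p)$ and the SISO filter $\psi$, and these two systems commute. Consequently there is a state transformation relating the realization ``plant then filter'' to the realization ``filter then plant''. I would verify that this transformation is exactly $T=\bigl[\begin{smallmatrix}T_f\\ \tilde T\end{smallmatrix}\bigr]$, with $N$ solving the Sylvester equation~\eqref{eq:N}. The Sylvester equation is uniquely solvable, since $A_f$ is nilpotent and hence the spectra of $A_p$ and $\rho A_f$ are disjoint, provided $A_p$ is invertible. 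This holds because every $\omega\in\Omega_\lambda$ has unit modulus. The check is to confirm that $T\hat A = (\cdot)T$, $T\hat B_w=(\cdot)$, and so on, which is routine algebra using~\eqref{eq:N} and the shift structure of $A_f$, together with the identity $\sum_j\lambda_j\rho^jA_p^{-j}$ encoding $\psi$ evaluated along the plant dynamics.

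Next I would apply a congruence with $T$ to the dual LMI and the coupling condition, setting $\tilde Y$ as the lower-right block of $T\hat Y T^\tp$, or more precisely of its appropriate Schur-complement reduction. In the commuted realization the filter states appear only through the output map, so the annihilator $\hat V$ reduces to $V$, and the filter block of the dual LMI becomes trivially satisfiable. After the congruence, the dual LMI therefore collapses to the second inequality of~\eqref{eq:synthesis}, which involves only $A_p$, $\tilde T\hat B_w$, $\hat D_{zw}$ and $\tilde Y$. The factor $\rho^{-2}$ comes from the $\rho^{-1}$ scaling of $A_p$. For the coupling condition, $\bigl(\begin{smallmatrix}\hat Y & I\\ I&\hat X\end{smallmatrix}\bigr)\succ0$ is equivalent, via congruence with $\mathrm{diag}(T,I)$ followed by elimination of the $T_f$ rows through a Schur complement, to $\bigl(\begin{smallmatrix}\tilde Y&\tilde T\\ \tilde T^\tp&\hat X\end{smallmatrix}\bigr)\succ0$. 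The key observation is that every resulting expression is affine in $(\lambda,N,\tilde Y,\hat X)$, because $\tilde T$ is affine in $\lambda$ and $N$.

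The main obstacle is the converse direction of this reduction: given $\tilde Y$ satisfying the reduced inequalities, one must reconstruct a full $\hat Y$ satisfying the original dual and coupling LMIs. Note that $T$ may be singular, for example when $\sum_j\lambda_j\rho^jA_p^{-j}$ is singular. I would first handle nonsingular $T$ by completing $\tilde Y$ with a large multiple of identity on the filter block and using strictness of the inequalities. I would then argue that strict feasibility is preserved under a small perturbation of $\lambda$ inside $\Lambda_\ell^\rho$ that makes $T$ invertible. This works because $\Lambda_\ell^\rho$ has nonempty interior and singularity of $T$ is a nongeneric condition.
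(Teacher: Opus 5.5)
Your route is the one in the reference the paper defers to; the paper gives no argument beyond citing \cite{CS-CE-TH:2023}. It consists of fixed-$\lambda$ elimination, the commuting realization via $T$ and \eqref{eq:N}, and then reduction of the dual and coupling conditions. The forward direction works once stated correctly. With $M\defeq\sum_j\lambda_j\rho^jA_p^{-j}$, \eqref{eq:N} gives $\tilde T\hat A=\rho^{-1}A_p\tilde T+\rho^{-1}\mu B_pC_fT_f$ and $NB_f=-\rho^{-1}\mu(M-\lambda_0 I)B_p$. Hence $(\tilde w,w_r)\mapsto(\tilde T^\tp\tilde w,w_r)$ maps $\ker\bmat{\rho^{-1}\mu B_p^\tp & L-\mu}$ injectively into the range of $\hat V^\tp$. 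The full dual LMI restricted to this image is exactly the reduced one with $\tilde Y=\tilde T\hat Y\tilde T^\tp$. This is a restriction to a subspace, not a Schur-complement reduction. Also, the filter states are not confined to the output map: in $T$-coordinates they drive the plant block through $\rho^{-1}\mu B_pC_f$. The coupling condition is a principal submatrix after congruence with $\mathrm{diag}(T,I)$.

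Your concern about singular $T$ is moot. The $\bar q$-diagonal entry of the primal LMI (and likewise of \eqref{eq:analysis}) is $\hat B_w^\tp\hat X\hat B_w-2\lambda_0<0$, so $\lambda_0>0$. Then $\lambda\in\Lambda_\ell^\rho$ gives $\lambda_0\ge\sum_{j\ge1}\rho^{-j}|\lambda_j|$, hence $\psi(\omega/\rho)\neq0$ for $|\omega|=1$. So $M$ and $T$ are always invertible, and no perturbation is needed.

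The genuine gap is in the converse: the filter block is not trivially satisfiable, and completing with $\tau I$ fails. Write $\eta=T^{-\tp}w_x$ and $c\defeq\rho^{-1}\mu B_p^\tp\eta_p+(L-\mu)w_r$. The range of $\hat V^\tp$ is then $\{\eta_{f,\ell}=-\lambda_0c\}$. The adjoint filter state propagates as $A_f^\tp\eta_f+C_f^\tp c=\mathcal F\eta_f$ with $\mathcal F\defeq(A_f-\lambda_0^{-1}B_fC_f)^\tp$, and for $\eta_p=0$ the supply term vanishes identically. So the full dual LMI forces $\mathcal Y_{ff}-\mathcal F^\tp\mathcal Y_{ff}\mathcal F\succ0$ with $\mathcal Y_{ff}\succ0$, where $\mathcal Y\defeq T\hat YT^\tp$. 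In other words, all zeros of $\psi$ must lie in the open unit disk. The reduced LMI lives on $\eta_f=0$ and cannot see this. It must therefore come from $\Lambda_\ell^\rho$, which your converse never uses.

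Concretely, take $\mathcal Y_{ff}=\tau I$ with $\ell=2$ and $\lambda_1\neq0$. The $\tau$-term $\|\eta_f\|^2-\|\mathcal F\eta_f\|^2$ has no $\eta_{f,1}^2$ term but has the cross term $2\lambda_1\lambda_0^{-1}\eta_{f,1}\eta_{f,2}$. It is therefore indefinite, and letting $\tau$ grow destroys the inequality.

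The fix runs as follows.
\begin{enumerate}
\item The bound $\lambda_0\ge\sum_{j\ge1}\rho^{-j}|\lambda_j|$ places all zeros of $\psi(z)=\sum_j\lambda_jz^{-j}$ in $|z|\le\rho$, so $\mathcal F$ is Schur.
\item Choose $Y_0\succ0$ with $Y_0-\mathcal F^\tp Y_0\mathcal F\succ0$, and set $\hat Y=T^{-1}\mathrm{diag}(\tau Y_0,\tilde Y)T^{-\tp}$.
\item The dual form becomes $\tau\,\eta_f^\tp(Y_0-\mathcal F^\tp Y_0\mathcal F)\eta_f+R$. The first term vanishes exactly on $\{\eta_f=0,\ c=0\}$, and there $R$ equals the reduced dual form, so the dual LMI holds for $\tau$ large.
\item By a Schur complement, the coupling condition reduces to the coupling inequality in \eqref{eq:synthesis} with $\hat X$ replaced by $\hat X-\tau^{-1}T_f^\tp Y_0^{-1}T_f$, which also holds for $\tau$ large.
\end{enumerate}
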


The (equivalent) statements in \Cref{thm:synthesis} imply that there exists a controller $K$ for which the corresponding algorithm~\eqref{eq:structure} exponentially tracks the optimizer of~\eqref{eq:tvopt} for all functions $f$ and trajectories $\theta$ that satisfy \Cref{assumption:objective,assumption:exosystem,assumption:zero-dynamics}. Moreover, \eqref{eq:N} and \eqref{eq:synthesis} are \textit{convex} constraints in all of the unknowns, and hence provide an efficient procedure to verify the existence of an algorithm that exponentially tracks the optimizer with a given rate. The minimal rate can then be found by bisection over $\rho$. The corresponding controller can be reconstructed from the multiplier by finding a solution to the analysis condition~\eqref{eq:analysis}, thereby solving \Cref{prob:main}.


\section{Numerical Investigation}

We now use the convex synthesis procedure to simulate the synthesized algorithm on a particular time-varying optimization problem and study how the rate depends on parameters.

\subsection{Simulation}\label{sec:simulation}

We first illustrate our results by applying our designed algorithm to a particular time-varying optimization problem. We consider the instance of~\eqref{eq:tvopt} with objective
\begin{align}\label{eq:logistic}
    f(z,\theta_k) = \tfrac{1}{2} \bigl(z - e_1^\tp\theta_k\bigr)^2 + a \log\bigl(1+e^{b z}\bigr),
\end{align}
where $f: \real \times \real^2 \to \real$ and $e_1 = (1,0,\ldots,0)\in\real^p$. This models a logistic regression problem with a time-varying regularization term. Intuitively, an optimizer is a point that tracks the time-varying signal $e_1^\tp\theta_k$ while avoiding large values, which are penalized by the logistic term. This objective satisfies \Cref{assumption:objective} with parameters $\mu=1$ and $L=1+\tfrac{ab^2}{4}$. For our experiments, we used $a = 1$ and $b = 6$.
We generated the parameter vector $\theta_k$ with frequencies $\theta=0$ and $\theta=\pi/3$. The harmonics are those in \Cref{example:harmonics}, and the transfer function in~\eqref{eq:structure} is $H(z) = z^5/(z^6 - 1)$. The transfer function of the synthesized controller (after reduction) is $K(z) = -0.18/z^5$. Interestingly, the controller canceled all five zeros at $z=0$ of $H(z)$, resulting in the closed-loop transfer function $G(z) = -0.18 / (z^6-1)$, which exponentially tracks the time-varying optimizer of~\eqref{eq:logistic} with rate $\rho = 0.9672$. In \Cref{fig:simulation}, we plot the gradient norm $\|w_k\|$ of the algorithm trajectory along with the theoretical rate $\rho^k$, illustrating that the synthesized algorithm attains the expected rate. 

\begin{figure}[t]
\centering\includegraphics[width=0.9\columnwidth]{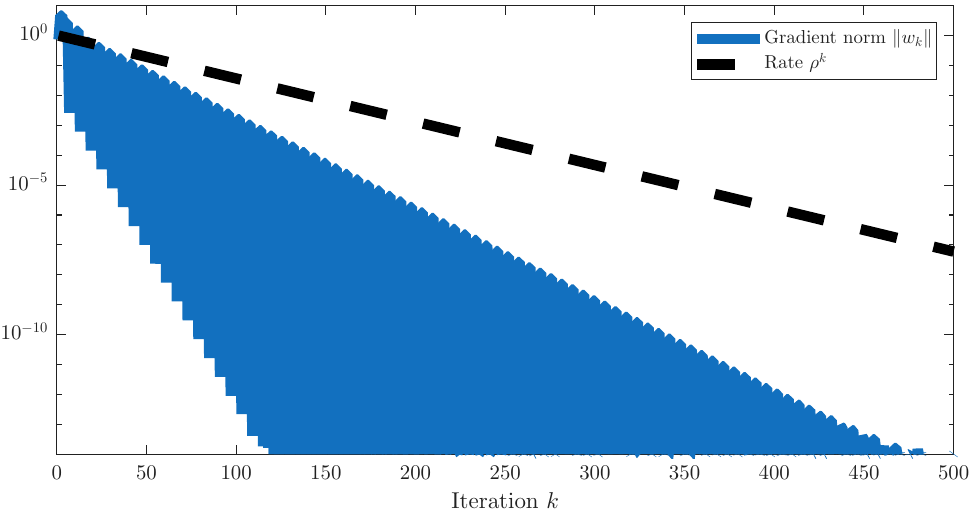}
\caption{Simulation of the synthesized controller on a time-varying regularized logistic regression problem; see \Cref{sec:simulation}.}
\label{fig:simulation}
\end{figure}

\subsection{Convergence rates}\label{sec:rate}

The convergence rate of the algorithm depends on the objective parameters $\mu$ and $L$, the exosystem $S$, and the length $\ell$ of the multiplier used to characterize the gradient. To better understand how the rate varies with these parameters, we plot~$\rho$ as a function of the exosystem frequency $\theta$ in \Cref{fig:rate} with $\mu=1$, $L=10$, and $\ell=1$. The controller is then synthesized using the harmonics $\Omega_\lambda$, where $\lambda = (e^{j\theta},e^{-j\theta})$ if $0<\theta<\pi$, $\lambda = 1$ if $\theta=0$, and $\lambda = -1$ if $\theta = \pi$. Also shown is the rate of the triple momentum method~\cite{BVS-RF-KL:17}, which achieves the optimal rate $\rho_\text{TM} = 1-\sqrt{\mu/L}$ in the time-invariant case when $\theta=0$.

\begin{figure}[ht]
\centering\includegraphics[width=0.9\linewidth]{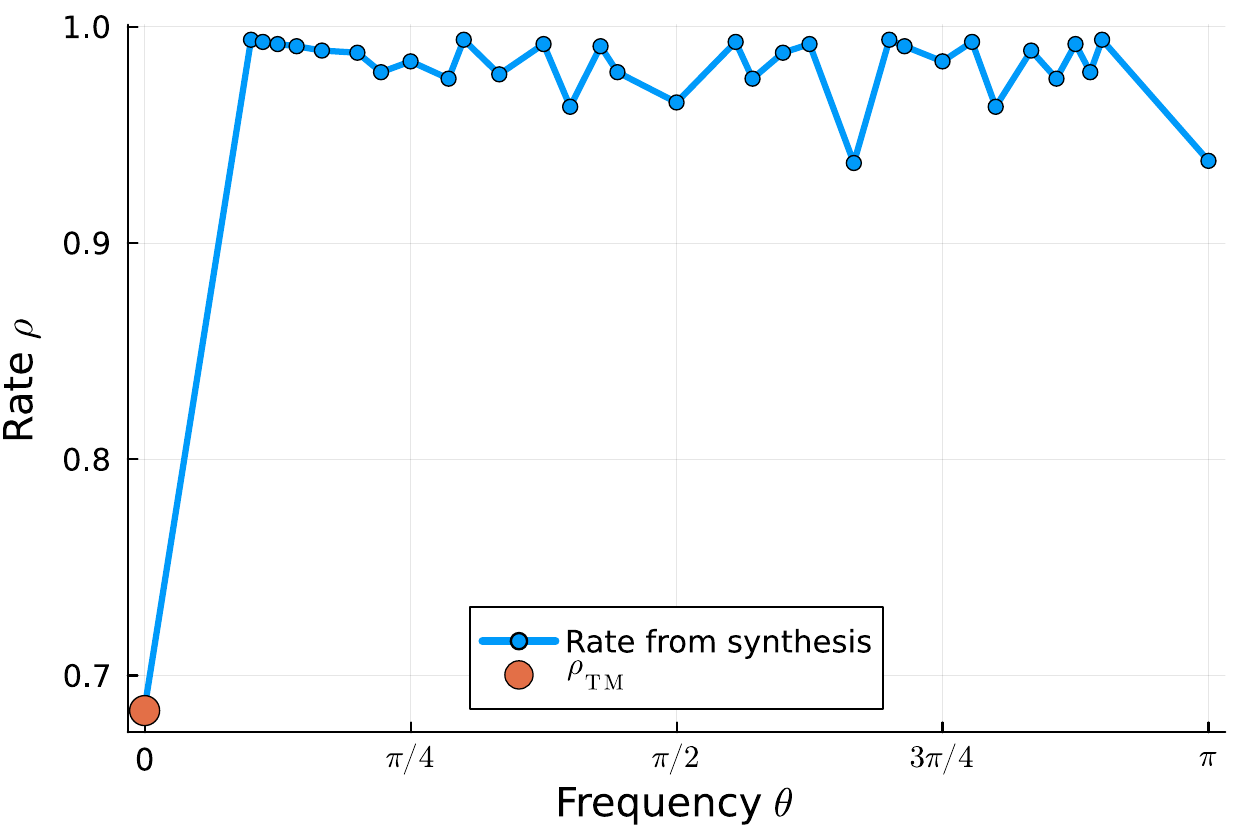}
\caption{Rate $\rho$ of the synthesized algorithm as a function of exosystem frequency $\theta$ with $\mu=1$, $L=10$, and $\ell=1$.}
\label{fig:rate}
\end{figure}

We now draw several observations from \Cref{fig:rate}:
\begin{itemize}
    
    \item The attainable rate varies with the frequency $\theta$ of the time variation. More precisely, the rate is not monotonic in $\theta$, meaning that in some cases there exists an algorithm that converges faster for problems with higher frequencies than for lower frequencies.
    
    \item Developing further on the point above, the fact that the attainable rate is not constant in $\theta$ reveals a key difference from~\cite{BV-GB:2026}, where, for quadratic objectives, lower bounds on the convergence rate do \textit{not} depend on the frequency of the temporal variation, but only on the order of the exosystem model.
    
    \item The rate is significantly slower than the time-invariant counterpart (where 
    $\rho = \rho_\text{TM} \approx 0.6838$), even for slow time variations (small $\theta>0$). 
\end{itemize}

\section{Conclusions}

We showed how internal models can be systematically embedded into the convex synthesis framework from robust control to design first-order optimization methods for smooth, strongly convex objectives that vary in time. The proposed methodology is constructive and directly implementable whenever a model of the temporal variability is available, thereby enabling principled algorithm design with guaranteed tracking and robustness properties. Extensions include nonlinear exosystem models and adaptive algorithms that estimate the model online.

\appendix

\section{Proof of \Cref{thm:main}}
\label{sec:comlemetary_proofs}

The proof of \Cref{thm:main} relies on two lemmas.

\begin{lemma}[Internal model principle]\label{lemma:IMP}
    Fix the objective parameters $\mu$ and $L$, exosystem model $S$, algorithm matrices $(A,B,C)$, and rate $\rho\in(0,1)$. Suppose the feedback interconnection of algorithm~\eqref{eq:alg} with uncertainty $\Delta$ (see \Cref{fig:robust-control} (left)) is robustly locally exponentially stable with rate $\rho$ for all~${\Delta\in\mathbf{\Delta}}$. Then, the following statements are equivalent:
    \begin{itemize}
        \item[(i)] Algorithm~\eqref{eq:alg} locally asymptotically tracks the optimizer of~\eqref{eq:tvopt} with rate $\rho$ for all functions $f$ and exosystem trajectories $\theta$ that satisfy \Cref{assumption:objective,assumption:exosystem,assumption:zero-dynamics}.
        \item[(ii)] There exists an analytic mapping $\sigma : \real^p\to\real^n$ satisfying the internal model kinematic and output equations:
        \begin{equation*}
        \sigma(S\theta) = A \sigma(\theta) \quad\text{and}\quad
            \nabla_z f(C \sigma(\theta), \theta) = 0.
        \end{equation*}
    \end{itemize}
\end{lemma}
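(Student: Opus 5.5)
This is the regulator-equations characterization from nonlinear output regulation (Isidori--Byrnes), adapted to the discrete-time interconnection of the algorithm~\eqref{eq:alg} with the gradient oracle driven by the exosystem~\eqref{eq:exosystem}. We view the composite system with state $(x,\theta)$ as
\[
    x_{k+1} = A x_k + B\,\nabla_z f(Cx_k,\theta_k),\qquad \theta_{k+1}=S\theta_k,
\]
with regulated output $e_k=\nabla_z f(Cx_k,\theta_k)$. By strong convexity (\Cref{assumption:objective}), $e_k=0$ if and only if $z_k=z_k^\star$. Moreover, $\|z_k-z_k^\star\|\le \mu^{-1}\|e_k\|$ and $\|e_k\|\le L\|z_k-z_k^\star\|$, so tracking of $z^\star$ is equivalent to regulation of $e$. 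The standing hypothesis gives robust local exponential stability of the unforced loop, i.e.\ of the Jacobian at the equilibrium. Together with neutral stability of $S$ (unit-modulus eigenvalues, bounded trajectories, \Cref{assumption:exosystem}), this places us in the setting where a center manifold of the composite system exists.

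\emph{(ii)$\Rightarrow$(i).} Given $\sigma$, the set $\{x=\sigma(\theta)\}$ is invariant: if $x_k=\sigma(\theta_k)$, then $e_k=\nabla_z f(C\sigma(\theta_k),\theta_k)=0$, so $x_{k+1}=A\sigma(\theta_k)=\sigma(S\theta_k)=\sigma(\theta_{k+1})$. Along this manifold $z_k=C\sigma(\theta_k)=z_k^\star$, and $x_k^\star\defeq\sigma(\theta_k)$ is the ``fixed point'' referenced in the definition. Set $\tilde x_k=x_k-\sigma(\theta_k)$. Then
\[
    \tilde x_{k+1}=A\tilde x_k+B\bigl(\nabla_z f(C\tilde x_k+C\sigma(\theta_k),\theta_k)-\nabla_z f(C\sigma(\theta_k),\theta_k)\bigr).
\]
The bracketed term is a (time-varying) sector-bounded map of $C\tilde x_k$ lying in $[\mu,L]$, and it is generated pointwise by the shifted function $\tilde f_k(y)=f(y+C\sigma(\theta_k),\theta_k)$, whose gradient vanishes at zero. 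The robust exponential stability hypothesis, applied to the class $\mathbf{\Delta}$ (extended to time-varying members, which the Zames--Falb/IQC certificate covers since it is applied pointwise in time), then gives $\|\tilde x_k\|\le c\rho^k\|\tilde x_0\|$. Consequently $\|z_k-z_k^\star\|\le\|C\|\,c\rho^k\|x_0-x_0^\star\|$.

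\emph{(i)$\Rightarrow$(ii).} Assume tracking holds. By local exponential stability of the $x$-subsystem at $\theta=0$ and neutral stability of $S$, the center manifold theorem for maps gives a locally invariant graph $x=\sigma(\theta)$ that attracts all nearby trajectories exponentially, with $\sigma(S\theta)=A\sigma(\theta)+B\nabla_z f(C\sigma(\theta),\theta)$. Restricted to this manifold, $e_k=\nabla_z f(C\sigma(\theta_k),\theta_k)$ is a function of $\theta_k$ only. Since $\theta_k$ evolves on bounded, recurrent (Poisson stable) orbits, a decaying $e_k$ forces $\nabla_z f(C\sigma(\theta),\theta)=0$ on each orbit, and hence identically near $0$. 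Substituting back yields $\sigma(S\theta)=A\sigma(\theta)$. Analyticity of $\sigma$ follows by arguing on analytic $f$, or by noting that the invariance equation's solution is obtained from a convergent series in the linear-in-$\theta$ setting; the lemma's statement presupposes this regularity.

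The main obstacle is this converse step. Center manifolds are generally neither unique nor analytic, and they exist only locally. The recurrence argument requires the exosystem to be neutrally stable, which is exactly the bounded-trajectories part of \Cref{assumption:exosystem}. If this route proves too delicate, I would instead use the Poisson-stability argument directly: define $\sigma$ as the limit along the stable dynamics, $\sigma(\theta)=\lim_{k\to\infty}$ of the state started near the steady state after pulling back by $S^{-k}$. That limit exists by exponential contraction, and it inherits the invariance equation. The output equation then follows from the assumed tracking.
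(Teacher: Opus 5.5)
The paper proves this lemma only by citing \cite[Theorem~2]{GB-BVS:25-cdc}, so your self-contained regulator-equation argument is a different route. Your necessity half is the standard Isidori--Byrnes argument: a center manifold of the composite map, then Poisson stability of the neutrally stable exosystem forcing $\nabla_z f(C\sigma(\theta),\theta)\equiv 0$, then substitution into the invariance equation. The sufficiency half (ii)$\Rightarrow$(i), however, has a genuine gap.

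After subtracting $\sigma(\theta_k)$ you get $\tilde x_{k+1}=A\tilde x_k+B\nabla\tilde f_k(C\tilde x_k)$, with each $\tilde f_k\in\F_{\mu,L}$ and $\nabla\tilde f_k(0)=0$. But $\tilde f_k$ changes with $k$, whereas $\mathbf{\Delta}$ contains only static maps generated by one fixed function for all time. Robust exponential stability over $\mathbf{\Delta}$ does not transfer to such time-varying sequences. A concrete counterexample: Nesterov's method with $\alpha=1/L$ and $\beta=\frac{\sqrt\kappa-1}{\sqrt\kappa+1}$, where $\kappa=L/\mu$, converges for every fixed $f\in\F_{\mu,L}$. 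Yet run it on scalar quadratics $\tfrac{h_k}{2}y^2$ with $h_k=L$ for one step and then $h_k=\mu$ for $m$ steps, repeated periodically. The cycle matrix has nonzero eigenvalue $-m(1-1/\sqrt\kappa)^{m+1}$, whose magnitude is about $3.1$ for $\kappa=100$, $m=9$, so the iterates diverge.

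Your parenthetical justification is also incorrect. Only the pointwise sector constraint ($\ell=0$) stays valid under time variation. The Zames--Falb inequalities with $\ell\ge1$ come from interpolation conditions that couple $(z_k,w_k)$ and $(z_{k-j},w_{k-j})$ through the \emph{same} function. Moreover, the lemma's hypothesis is a stability property, not a multiplier certificate. Hence $\|\tilde x_k\|\le c\rho^k\|\tilde x_0\|$ does not follow.

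What rescues the statement is locality, which you never use at this step. Assume $f\in C^2$, which is more than \Cref{assumption:objective} provides.
\begin{itemize}
\item Apply the hypothesis to the single map $z\mapsto\nabla_z f(z+C\sigma(0),0)\in\mathbf{\Delta}$. This gives that $J=A+B\,\nabla^2_{zz}f(C\sigma(0),0)\,C$ has spectral radius at most $\rho$.
\item The composite Jacobian at $(\sigma(0),0)$ is block triangular with diagonal blocks $J$ and $S$.
\item The graph of $\sigma$ is invariant and tangent to the center subspace, hence a center manifold. It is therefore locally exponentially attractive: $\|x_k-\sigma(\theta_k)\|\le Ma^k\|x_0-\sigma(\theta_0)\|$ for small $(x_0,\theta_0)$ and any $a>\rho(J)$.
\end{itemize}
Equivalently, since $\theta_k$ stays $O(\|\theta_0\|)$, the linear time-varying error dynamics are a small perturbation of the Schur matrix $J$. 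This yields tracking at exactly rate $\rho$ only when $\rho(J)<\rho$; otherwise you get any rate above $\rho$.

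Your necessity direction has milder, regularity-type issues:
\begin{itemize}
\item The center manifold theorem needs $f$ at least $C^2$. It delivers a local $C^k$ graph, not an analytic $\sigma:\real^p\to\real^n$. Center manifolds of analytic maps need not be analytic, so \emph{analyticity follows} is not an argument.
\item Your fallback pull-back construction of $\sigma$ relies on incremental exponential stability of the $\theta$-forced loop. That is the same time-varying robustness that fails above, so it too is only available locally.
\end{itemize}
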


\begin{proof}
    The result follows from \cite[Theorem 2]{GB-BVS:25-cdc}.\hfill
\end{proof}

We now explore some consequences of condition (ii). Since $\sigma$ is real analytic, it admits a (locally convergent) multivariate power series expansion of the form
\begin{equation}\label{eq:series}
    \sigma(\theta) = \sum_{\alpha\in\natural^p} c_\alpha \theta^\alpha,
    \qquad
    \theta^\alpha \defeq \theta_1^{\alpha_1}\cdots \theta_p^{\alpha_p},
\end{equation}
where the coefficients $c_\alpha \in \real^n$ are uniquely determined. The monomials $\{\theta^\alpha\}_{\alpha\in\natural^p}$ form a linearly independent basis for analytic functions, so this representation is unique.

If $S$ is diagonalizable with eigenvalues $\lambda=(\lambda_1,\ldots,\lambda_p)$, then in diagonalized coordinates one has $(S\theta)^\alpha = \lambda^\alpha \theta^\alpha$, where $\lambda^\alpha \defeq \lambda_1^{\alpha_1}\cdots\lambda_p^{\alpha_p}$. Substituting the expansion into the kinematic relation $\sigma(S\theta)=A\sigma(\theta)$ yields
\[
    \sum_{\alpha\in\natural^p} \lambda^\alpha c_\alpha \theta^\alpha
    =
    \sum_{\alpha\in\natural^p} A c_\alpha \theta^\alpha.
\]
By linear independence of the monomials, the coefficients of each $\theta^\alpha$ must match, which gives
\[
    (A-\lambda^\alpha I) c_\alpha=0 \quad \text{for all } \alpha\in\natural^p.
\]
In addition, by the implicit function theorem, there exists an optimizer $z_*(\theta)$ such that $\nabla_z f(z_*(\theta),\theta)\equiv 0$. Since the optimizer is unique, the output condition requires
\[
    C \sum_\alpha c_\alpha y^\alpha = C \tilde\sigma(y) = z_*(y) = \sum_\alpha d_\alpha y^\alpha,
\]
where $d_\alpha\in\real$ are the coefficients in the series expansion of $z_*(y)$. Again matching coefficients gives $C c_\alpha = d_\alpha$. To summarize, a mapping $\sigma$ with series expansion~\eqref{eq:series} satisfies the internal model conditions if and only if $(A-\lambda^\alpha I)c_\alpha = 0$ and ${C c_\alpha = d_\alpha}$ for each multi-index $\alpha\in\natural^p$. This insight leads to the following result on the algorithm structure.

\begin{lemma}[Algorithm structure]\label{lemma:structure}
    Fix the objective parameters $\mu$ and $L$, exosystem model $S$, and algorithm matrices $(A,B,C)$. Suppose $S$ is diagonalizable with eigenvalues $\lambda=(\lambda_1,\ldots,\lambda_p)\in\complex^p$. Then, the following are equivalent:
    \begin{itemize}
        \item[(i)] There exists an analytic mapping $\sigma : \real^p\to\real^n$ satisfying the internal model kinematic and output equations:
        \begin{equation*}
        \sigma(S\theta) = A \sigma(\theta) \quad\text{and}\quad
            \nabla_z f(C \sigma(\theta), \theta) = 0.
        \end{equation*}
        \item[(ii)] For each harmonic $\omega\in\Omega_\lambda$, $A$ has an eigenvalue at~$\omega$ and the following non-resonance condition holds:
        \begin{equation*}
            \bmat{A-\omega I & B \\ C & 0} \text{ full rank}. \tag*{\QEDB}
        \end{equation*}
    \end{itemize}
\end{lemma}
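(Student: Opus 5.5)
Taking the coefficient-matching reduction just above as the starting point: a series $\sigma(\theta)=\sum_\alpha c_\alpha\theta^\alpha$ satisfies the internal model equations if and only if, for every multi-index $\alpha$,
\[
(A-\lambda^\alpha I)c_\alpha=0 \qquad\text{and}\qquad Cc_\alpha=d_\alpha ,
\]
where $d_\alpha$ are the Taylor coefficients of the optimizer map $z_*(\theta)$. Here the $c_\alpha$ and $d_\alpha$ may be complex in the diagonalizing coordinates of $S$, and real $\sigma$ is recovered by conjugate symmetry. The problem therefore decouples into one linear algebra question per harmonic $\omega=\lambda^\alpha\in\Omega_\lambda$, namely solving $(A-\omega I)c=0$ and $Cc=d$ in $c$. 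Both implications reduce to deciding when this system is solvable for the data $d$ that can actually occur.

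For (i)$\Rightarrow$(ii), I would use the freedom in $f$. The claim must hold for every admissible $f$ satisfying Assumptions 1--3, so for each $\alpha$ I can choose an objective whose optimizer has $d_\alpha\neq 0$. A quadratic $f(z,\theta)=\tfrac{\mu}{2}\|z\|^2-z\,q(\theta)$ with $q$ a polynomial containing $\theta^\alpha$ works, since its optimizer is $q(\theta)/\mu$. Solvability then forces a vector $c\neq0$ with $(A-\omega I)c=0$, so $\omega$ is an eigenvalue of $A$, and moreover $Cc\neq 0$, so the eigenvalue is observable.

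Next, non-resonance. The matrix $\bigl[\begin{smallmatrix}A-\omega I & B\\ C&0\end{smallmatrix}\bigr]$ fails to have full rank exactly when $\omega$ is an invariant zero of $(A,B,C)$, or when $\omega$ is an unobservable or uncontrollable mode. I have just ruled out unobservability. For the remaining cases I would invoke the standing hypothesis of Lemma~\ref{lemma:IMP}, robust stability, applied to the linear member $\Delta=\mu I\in\mathbf\Delta$. The closed-loop matrix $A+\mu BC$ is then Schur. If $\omega$ (with $|\omega|\le 1$) were a zero, or an uncontrollable mode, there would be a left vector $(v,u)$ with $v(A-\omega I)+uC=0$ and $vB=0$. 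This gives $v(A+\mu BC)=\omega v+(u+\mu vB)C$, which I expect contradicts stability or observability of the eigenvalue $\omega$. Without that hypothesis, I would justify rank deficiency via the right kernel: a vector $(c,v)$ with $(A-\omega I)c+Bv=0$ and $Cc=0$ combined with an observable eigenvector produces non-unique steady states, contradicting uniqueness of $z_*$.

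For (ii)$\Rightarrow$(i), fix $\alpha$ and set $\omega=\lambda^\alpha$. Full rank of the square matrix $\bigl[\begin{smallmatrix}A-\omega I&B\\C&0\end{smallmatrix}\bigr]$ implies full row rank of the lower block restricted to $\ker(A-\omega I)$. Equivalently, $C$ does not vanish on the eigenspace: if $Cc=0$ for all $c\in\ker(A-\omega I)$, then $(c,0)$ would lie in the kernel. Since the output is scalar (or $d$-dimensional via the Kronecker structure), I can pick $c_\alpha$ in the eigenspace with $Cc_\alpha=d_\alpha$. The remaining issue is convergence of $\sum c_\alpha\theta^\alpha$. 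I would choose $c_\alpha=d_\alpha\,\hat c_\omega$, where $\hat c_\omega$ is a fixed normalized eigenvector depending only on $\omega$. The set $\Omega_\lambda$ is finite here, as Theorem~\ref{thm:main} implicitly assumes ($|\Omega_\lambda|<\infty$, i.e.\ the eigenvalues are roots of unity). Hence $\|c_\alpha\|\le M|d_\alpha|$ for a uniform constant $M$, and convergence, hence analyticity of $\sigma$, is inherited from the analyticity of $z_*$ given by the implicit function theorem. Conjugate-pair harmonics receive conjugate eigenvectors so that $\sigma$ is real.

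\textbf{Main obstacle.} I expect the hardest step to be the necessity of non-resonance in (i)$\Rightarrow$(ii), specifically excluding an invariant zero at a harmonic. The internal model equations themselves only give observable eigenvalues. The zero condition has to come from the stability hypothesis in Lemma~\ref{lemma:IMP}, or from the equivalence used in the proof of Theorem~\ref{thm:main}. I would first try the argument with the linear uncertainty $\Delta=\mu I$ sketched above, checking that a pole–zero cancellation at $|\omega|=1$ leaves a non-Schur closed-loop mode.
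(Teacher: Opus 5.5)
Your (ii)$\Rightarrow$(i) is the paper's argument: take an eigenvector with $Cv\neq0$ and rescale it by $d_\alpha$. You also supply what the paper omits. Fixing one normalized eigenvector per harmonic gives $\|c_\alpha\|\le M|d_\alpha|$, so convergence and realness of $\sigma$ are inherited from $z_*$. Finiteness of $\Omega_\lambda$ needs no appeal to Theorem~\ref{thm:main}: (ii) makes every harmonic an eigenvalue of $A$, so $|\Omega_\lambda|\le n$.

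For (i)$\Rightarrow$(ii) your route genuinely differs from the paper's. The paper takes a left null vector $(p,q)$ and argues that $y_k=p^\tp\sigma(\theta_k)$, which obeys $y_{k+1}-\omega y_k=-q^\tp z^*(\theta_k)$, can be driven into resonance by a suitable $f$. It never uses stability. Your appeal to the stability hypothesis of Lemma~\ref{lemma:IMP}, via $\Delta=\mu I\in\mathbf{\Delta}$, is what actually works:
\begin{itemize}
\item If $q\neq0$, then $p^\tp(A-\omega I)c_\alpha+q\,Cc_\alpha=0$ already gives $q\,d_\alpha=0$, a contradiction with no resonance needed.
\item If $q=0$, i.e.\ $\omega$ is an uncontrollable mode, the forcing vanishes for every $f$ and nothing resonates, so the paper's argument says nothing.
\end{itemize}
Without stability the implication is in fact false. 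Take $n=1$, $A=1$, $B=0$, $C=1$, $S=I$. Then $\Omega_\lambda=\{1\}$ and $\sigma=z_*$ satisfies (i) for every $f$, yet $\bigl[\begin{smallmatrix}0&0\\1&0\end{smallmatrix}\bigr]$ is singular. So your extra hypothesis is necessary, and it holds in the setting of Lemma~\ref{lemma:IMP}, which is where this lemma is used.

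Your step ``which I expect contradicts\ldots'' closes as follows. First fix the sign: with $vB=0$ you get $v(A+\mu BC)=\omega v-uC$. Then apply the left null vector to the observable eigenvector $c$ from your first step: $v(A-\omega I)c+u\,Cc=u\,Cc=0$, so $u=0$. Hence $v(A+\mu BC)=\omega v$ with $|\omega|=1$, contradicting that $A+\mu BC$ is Schur. Every rank deficiency of the square matrix produces such a left null vector, so this single argument covers all cases.

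It also shows your ``main obstacle'' is misplaced. The case $u\neq0$, which you call an invariant zero, is excluded by the internal model equations alone. Only the uncontrollable case $u=0$ needs stability.

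Two corrections remain.
\begin{itemize}
\item You have not ``ruled out unobservability'' when $\dim\ker(A-\omega I)\ge2$, because a scalar $C$ then annihilates some eigenvector. This is harmless: it forces $(A,B)$ to be uncontrollable at $\omega$, which the argument above catches.
\item Drop the stability-free fallback via non-unique steady states. Statement (i) only asserts existence of some $\sigma$, and the counterexample shows no stability-free argument can succeed.
\end{itemize}
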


\begin{proof}
    (ii) $\Rightarrow$ (i). From (ii), there exists an eigenvector $v_\alpha\in\complex^n$ of $A$ with eigenvalue $\lambda^\alpha\in\complex$ for each ${\alpha\in\natural^p}$. Moreover, non-resonance implies that the product
    \[
        \bmat{A-\lambda^\alpha I & B \\ C & 0} \bmat{v_\alpha \\ 0}
    \]
    is nonzero, so $C v_\alpha\neq 0$. From our previous discussion, the mapping $\sigma$ of the form~\eqref{eq:series} with $c_\alpha = d_\alpha (C v_\alpha)^{-1} v_\alpha$ satisfies the internal model conditions, so (i) holds.

    (i) $\Rightarrow$ (ii). Suppose (i) holds, and consider the series expansion of~$\sigma$ in~\eqref{eq:series}. Suppose that, for some $\alpha$, the optimizer contains mode $y^\alpha$, meaning that $d_\alpha\neq 0$. The conditions on $c_\alpha$ imply that it is an eigenvector of $A$ with eigenvalue $\omega = \lambda^\alpha$. Suppose the non-resonance condition fails with this~$\alpha$. Then there exists a nonzero pair $(p,q)$ such that
    \[
        \omega p^\tp = p^\tp A + q^\tp C \qquad\text{and}\qquad
        0 = p^\tp B.
    \]
    Multiplying the kinematic internal model condition on the left by $p^\tp$ and using this first equation yields
    \[
        p^\tp \sigma(S\theta) = \omega p^\tp \sigma(\theta) - q^\tp z^*(\theta),
    \]
    where $z^*(\theta) = C \sigma(\theta)$ is the optimizer. Now consider a trajectory $\theta_k$ of the exosystem and define $y_k \defeq p^\tp \sigma(\theta_k)$. This signal evolves according to the linear difference equation
    \[
        y_{k+1} - \omega y_k = - q^\tp z^*(\theta_k),
    \]
    which has natural frequency $\omega$. We can choose $f$ such that its optimal trajectory is such that the forcing term $-q^\tp z^*(\theta_k)$ is nonzero at this frequency, which then resonates with the system to grow unbounded. But $\sigma$ is continuous from (i) and the exosystem trajectory is bounded from \Cref{assumption:exosystem}, so $y_k$ must be bounded --- a contradiction. Therefore, the non-resonance condition is satisfied, so (ii) holds.\hfill
\end{proof}

Item (ii) of \Cref{lemma:structure} enforces that the algorithm transfer function $G(z)$ must have a pole at each harmonic $\omega\in\Omega_\lambda$. Therefore, any such transfer function must factor as~\eqref{eq:structure} for some proper rational transfer function $K(z)$. With this structure, \Cref{prob:main} becomes that of designing the controller $K(z)$ such that~\eqref{eq:alg} is robustly exponentially stable for all time-invariant objective functions that satisfy \Cref{assumption:objective}, from which the claim in \Cref{thm:main} follows.

{\bibliographystyle{IEEEtran}
\footnotesize
\bibliography{BIB/brevalias,BIB/references,BIB/full_GB,BIB/GB}}

\end{document}